\definecolor{amber}{rgb}{1.0,0.75,0.0}
\definecolor{applegreen}{rgb}{0.55,0.71,0.0}
\definecolor{byzantium}{rgb}{0.44, 0.16, 0.39}
\definecolor{cadmiumorange}{rgb}{0.93, 0.53, 0.18}
\definecolor{darkcyan}{rgb}{0.0, 0.55, 0.55}
\newcommand\beq{\begin{equation}}
\newcommand\eeq{\end{equation}}
\newcommand\bce{\begin{center}}
\newcommand\ece{\end{center}}
\newcommand\bea{\begin{eqnarray}}
\newcommand\eea{\end{eqnarray}}
\newcommand\ba{\begin{array}}
\newcommand\ea{\end{array}}
\newcommand\ben{\begin{enumerate}}
\newcommand\een{\end{enumerate}}
\newcommand\bit{\begin{itemize}}
\newcommand\eit{\end{itemize}}
\newcommand\brr{\begin{array}}
\newcommand\err{\end{array}}
\newcommand\bt{\begin{tabular}}
\newcommand\et{\end{tabular}}
\newtheorem{theorem}{Theorem}[section]
\newtheorem{definition}[theorem]{Definition}
\newtheorem{example}[theorem]{Example}
\newtheorem{defn}[theorem]{Definition}
\newtheorem{remark}[theorem]{Remark}
\newtheorem{observation}[theorem]{Observation}
\title{Separators - a new statistic for permutations}
\author{Eli Bagno, Estrella Eisenberg, Shulamit Reches and Moriah Sigron}
\date{\today}
\begin{document}

\maketitle
\abstract
A digit $\pi_j$ in a permutation $\pi=[\pi_1,\ldots,\pi_n]\in S_n$ is defined to be a separator of $\pi$ if by omitting it from $\pi$ we get a new $2-$block. In this work we introduce a new statistic, the number of separators, on the symmetric group $S_n$ and calculate its distribution over $S_n$. We also provide some enumerative and asymptotic results regarding this statistic. 

\section{Introduction}
Let $S_n$ be the Symmetric group of $n$ elements. Let $\sigma,\pi \in \bigcup_{n \in \mathbb{N}}{S_n}$. We say that $\sigma$ {\it contains} $\pi$ if there is a sub-sequence of elements of $\sigma$ that is order-isomorphic to $\pi$.
As an example, the permutation $\sigma=[3624715]$ (written in one-line-notation) contains $\pi=[3142]$ as both the sub-sequences $6275$ and $6475$ testify. 
If $\pi$ is contained in $\sigma$, then we write $\pi \preceq \sigma$.
The set of all permutations $\cup_{n \in \mathbb{N}} S_n$ is a poset under the partial order given by containment. This is called the permutation pattern poset.

\begin{example}
Let $\sigma=[53241] \in S_5$. In order to find the permutations in $S_4$ which are contained in $\sigma$, we shall remove each of the digits of $\pi$ and standardize. If we remove $\sigma_1=5$, we get the permutation $[3241]$, while if we remove $\sigma_2=3$ or $\sigma_3=2$ we get the permutation $[4231]$. The removal of $\sigma_4=4$ produces the permutation $[4321]$, and the removal of $\sigma_4=1$ produces the permutation $[4213]$. The situation can also be read from the the following picture:

\begin{tikzpicture}
\label{}
        \tikzstyle{every node} = [rectangle]
        
        \node (53241) at (0,0) {$[53241]$};
        \node (3241) at (-2,-2) {$[3241]$};
        \node at (-1.25,-1) {5};
        \node (4231) at (0,-2) {$[4231]$};
        \node at (0,-1.1) {3 2};
        \node (4321) at (2,-2) {$[4321]$};
        \node at (1.25,-1) {4};
        \node (4213) at (4,-2) {$[4213]$};
        \node at (2.25,-1) {1};

        \foreach \from/\to in {53241/3241,53241/4231,53241/4321,53241/4213}
            \draw[->] (\from) -- (\to);
    \end{tikzpicture}

\end{example}


In the example above, the removal of the digits $2$ and $3$ produced the same permutation. This is not coincidental. It happens since these two digits form a 2-block in $\pi$ as we define below:  (See also in \cite{H2}, Definition 4)
\begin{defn}

Let $\pi =[\pi_1,\ldots,\pi_n]$ be a permutation and let $i \in [n-1]$. We say that the pair $(\pi_i,\pi_{i+1})$ is a {\em 2-block} or a {\em bond} in $\pi$ if $\pi_{i}-\pi_{i+1} =\pm 1$ . We say that
the sequence $(\pi_{i}, \pi_{i+1},\ldots ,\pi_{i+k-1})$ is a {\em run} of length $k>1$ if, for $0 \leq j \leq k-2$, the pair $(\pi_{i+j},\pi_{i+j+1})$ is a {\rm bond}. We allow also (trivial) runs of length $k=1$.
Note that a run of a permutation might be ascending or descending.
Occasionally, we omit the parentheses when we write blocks or runs.

\end{defn}
\begin{example}
The permutation $\pi=[45187623]$ has $45$, $1$, $876$ and $23$ as its maximal runs. 
\end{example}
The distribution of the bonds has been examined previously in \cite{Wolfowitz, Kaplansky, H2}.  Each run of length $n \geq 1$ contains $n-1$ bonds.
 The number of bonds in a permutation $\sigma \in S_n$ affects the structure of the poset of all permutations contained in $\sigma$, the downset of $\sigma$. This happens since the number of permutations $\pi \in S_{n-1}$ such that $\pi \preceq \sigma$  is $n-\beta(\sigma)$, where $\beta(\sigma)$ is the number of bonds in $\sigma$. (See Theorem 6 in \cite{H2}).

To better understand the structure of the poset $\bigcup_{n \in \mathbb{N}}{S_n}$, we would like to get information not only about the number of bonds of a given $\sigma \in S_n$, but also about the number of bonds of the permutations contained in $\sigma$. Hence we introduce a new concept: A digit of a permutation, a removal of which produces a {\bf new} bond, will be called a {\it separator}. (see the formal definition below).  

\begin{example}
In the permutation $\pi=[567139482]$ we can omit $\pi_7=4$ and after standardizing we get the permutation $[45613872]$ which has the {\bf new} $2$-block $87$, so $4$ is a separator. The digit $2$ is also a separator of $\pi$, since the removal of it creates the permutation $[45612837]$ containing the $\textbf{new}$ bond $12$. Note that if we remove $\pi_2=6$ from $\pi$, we get the permutation $[56138472]$ which contains the bond $56$ that already exists in $\pi$, so $6$ is not a separator in $\pi$.
\end{example}

Formally:

\begin{definition}\label{def separate}
For $\sigma=[\sigma_1,\ldots,\sigma_n] \in S_n$ we say that ${\bf \textcolor{red}{\sigma_i}}$ {\em separates} $\sigma_{j_1}$ from $\sigma_{j_2}$ in $\sigma$ if by omitting $\sigma_i$ from $\sigma$ we get a {\bf new} $2-$block. This happens if and only if one of the following cases holds: 

\begin{enumerate}
\item $j_1,i,j_2$ are subsequent numbers and $|\sigma_{j_1}-\sigma_{j_2}|=1$, i.e $\sigma_i=b$  and $$\sigma=[\ldots,{\bf a},{\bf \textcolor{red}{b}},{\bf a \pm 1},\ldots]$$ We call $\sigma_i$  a separator of type $I$ or a vertical separator.
\item  $\sigma_{j_1},\sigma_i,\sigma_{j_2}$ are subsequent numbers and $|j_1-j_2|=1$, i.e $\sigma_i=a$ and $$\sigma=[\ldots,{\bf \textcolor{red}{a}},\ldots,{\bf a \pm1,a\mp1},\ldots]$$ or $$\sigma=[\ldots,{\bf a\pm1,a\mp1},\ldots,{\bf \textcolor{red}{a}},\ldots].$$
We call $\sigma_i$ a separator of type $II$ or a horizontal separator.
\end{enumerate}

\end{definition}
The choice of the names of the separators of types $I$ and $II$ is explained in the following picture in which $\sigma_1=3$ is a horizontal separator (which its omitting forms the 2-$block$  $23$), $\sigma_3=5$ is a vertical separator (which its omitting forms the 2-$block$  $12$) and $\sigma_4=2$ is both (which its omitting forms the two 2-$blocks$  $21$ and $43$).   

\textcolor{amber}{Horizontal separator} and \textcolor{blue}{Vertical separator} and \textcolor{applegreen}{separators of both types} .

\begin{tikzpicture}[cap=round,line width=0.2pt,scale=0.5]

  \node (31524) at (2.5,5.5) {$[\textcolor{amber}{\bf3}1\textcolor{blue}{5}\textcolor{applegreen}{2}4]$};

\draw[-](0,0)--(5,0);
\draw[-](0,1)--(5,1);
\draw[-](0,2)--(5,2);
\draw[-](0,3)--(5,3);
\draw[-](0,4)--(5,4);
\draw[-](0,5)--(5,5);
\draw[amber,line width=3pt](0,2.5)--(5,2.5);
\draw[-](0,0)--(0,5);
\draw[-](1,0)--(1,5);
\draw[-](2,0)--(2,5);
\draw[-](3,0)--(3,5);
\draw[-](4,0)--(4,5);
\draw[-](5,0)--(5,5);

\draw[line width=1pt][fill]  (0.5,2.5) circle (0.2cm);
\draw[line width=1pt][fill]  (1.5,0.5) circle (0.2cm);
\draw[line width=1pt][fill]  (2.5,4.5) circle (0.2cm);
\draw[line width=1pt][fill]  (3.5,1.5) circle (0.2cm);
\draw[line width=1pt][fill]  (4.5,3.5) circle (0.2cm);

\node (31524) at (8.5,5.5) {$[\textcolor{amber}{3}1\textcolor{blue}{\bf5}\textcolor{applegreen}{2}4]$};

\draw[-](6,0)--(11,0);
\draw[-](6,1)--(11,1);
\draw[-](6,2)--(11,2);
\draw[-](6,3)--(11,3);
\draw[-](6,4)--(11,4);
\draw[-](6,5)--(11,5);
\draw[blue,line width=3pt](8.5,0)--(8.5,5);
\draw[-](6,0)--(6,5);
\draw[-](7,0)--(7,5);
\draw[-](8,0)--(8,5);
\draw[-](9,0)--(9,5);
\draw[-](10,0)--(10,5);
\draw[-](11,0)--(11,5);

\draw[line width=1pt][fill]  (6.5,2.5) circle (0.2cm);
\draw[line width=1pt][fill]  (7.5,0.5) circle (0.2cm);
\draw[line width=1pt][fill]  (8.5,4.5) circle (0.2cm);
\draw[line width=1pt][fill]  (9.5,1.5) circle (0.2cm);
\draw[line width=1pt][fill]  (10.5,3.5) circle (0.2cm);

\node (31524) at (14.5,5.5) {$[\textcolor{amber}{3}1\textcolor{blue}{5}\textcolor{applegreen}{\bf2}4]$};

\draw[-](12,0)--(17,0);
\draw[-](12,1)--(17,1);
\draw[-](12,2)--(17,2);
\draw[-](12,3)--(17,3);
\draw[-](12,4)--(17,4);
\draw[-](12,5)--(17,5);
\draw[applegreen,line width=3pt](15.5,0)--(15.5,5);
\draw[applegreen,line width=3pt](12,1.5)--(17,1.5);
\draw[-](12,0)--(12,5);
\draw[-](13,0)--(13,5);
\draw[-](14,0)--(14,5);
\draw[-](15,0)--(15,5);
\draw[-](16,0)--(16,5);
\draw[-](17,0)--(17,5);

\draw[line width=1pt][fill]  (12.5,2.5) circle (0.2cm);
\draw[line width=1pt][fill]  (13.5,0.5) circle (0.2cm);
\draw[line width=1pt][fill]  (14.5,4.5) circle (0.2cm);
\draw[line width=1pt][fill]  (15.5,1.5) circle (0.2cm);
\draw[line width=1pt][fill]  (16.5,3.5) circle (0.2cm);


\end{tikzpicture}

\begin{defn}
Let $Sep_V(\pi)$ and $Sep_H(\pi)$ be the sets of vertical and horizontal separators of a permutation $\pi$ respectively. Let $Sep(\pi)=Sep_V(\pi) \cup Sep_{H}(\pi)$ and $sep(\pi)=|Sep(\pi)|$.
\end{defn}

\begin{example}
\label{ex_sep_types1and2}
Let $\sigma=[132465879]$.   Then $Sep_{V}(\sigma)=\{3,2,6,7\}$, and $Sep_{H}(\sigma)=\{3,2,5,8\}$. 
Note that $7$ is a vertical separator, even though $7$ is a part of a $2-$block: $87$, since by omitting $7$ from $\sigma$ we get a $\bf{new}$ $2-$block: $78$. 
\end{example}

\begin{remark}\label{obs on Sep}
Several comments are now in order:
\begin{enumerate}

\item Notice the significance of the word 'new' in Definition \ref{def separate}. For example, the identity permutation has plenty of $2$-blocks even though it has no separators.
\item The numbers $1$ and $n$ can only be vertical separators, while $\sigma_1$ and $\sigma_n$ can only be horizontal separators.
\item If $\sigma_i$ is a vertical separator in $\sigma$ then $i$ is a horizontal separator in $\sigma^{-1}$. Hence $Sep_V(\sigma)=Sep_{H}(\sigma^{-1})$

\item $Sep_V(\sigma)=Sep_V(\sigma^r)$ and  $Sep_H(\sigma)=Sep_{H}(\sigma^r)$ where $\sigma^r$ is the reverse of $\sigma$.  
\item A separator can be of both types, vertical and horizontal. For instance,  in example \ref{ex_sep_types1and2}, the digits $2,3$ are separators of both types.
\end{enumerate} 
\end{remark}

Permutations of $S_n$ which have no bonds are connected to the problem of placing $n$ non-attacking kings in an $n \times n$ chess board. These  permutations were counted in 
\cite{Ro}, and the structure of their containment poset is discussed in a recent paper by the authors 
of this note \cite{BERS}. The set of such permutations will be denoted by $K_n$. 

If $\sigma \in K_n$ then even though $\sigma$ has no bonds, after omitting a digit from $\sigma$, the resulting permutation might have (at least) one. Recall that in this case the omitted digit is a separator of $\sigma$.
The connection between the number of separators in $\sigma$ and the number of $\pi \in K_{n-1}$ such that $\pi \preceq \sigma$ is given by the following:

\begin{observation}

Let $\sigma \in K_n$. Then the number of $\pi \in K_{n-1}$ such that $\pi \preceq \sigma$  is $n-sep(\sigma)$ where $sep(\sigma)$ is the number of separators in  $\sigma$. 
\end{observation}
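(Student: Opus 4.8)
The plan is to deduce this from Theorem~6 of \cite{H2} together with the defining property of separators given in Definition~\ref{def separate}. Theorem~6 of \cite{H2} says that the elements $\pi \in S_{n-1}$ with $\pi \preceq \sigma$ are exactly those obtained by deleting one digit $\sigma_i$ from $\sigma$ and standardizing, and that two such deletions coincide precisely when the deleted digits form a bond; consequently their number is $n-\beta(\sigma)$. Since $\sigma \in K_n$ has no bonds, $\beta(\sigma)=0$, so the $n$ single-digit deletions yield $n$ \emph{distinct} permutations in $S_{n-1}$, and these are exactly the elements of the downset of $\sigma$.

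The heart of the argument is the following dichotomy: for $\sigma \in K_n$, the permutation obtained by deleting $\sigma_i$ and standardizing lies in $K_{n-1}$ if and only if $\sigma_i$ is \emph{not} a separator of $\sigma$. Indeed, if $\sigma_i$ is a separator, then by Definition~\ref{def separate} its deletion produces a new bond, so the resulting permutation has a bond and is not in $K_{n-1}$. Conversely, if $\sigma_i$ is not a separator, then no new bond is created by its deletion; as $\sigma$ itself had no bonds, the standardized result also has none, and hence lies in $K_{n-1}$.

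Granting the dichotomy, the count follows immediately. The $n$ distinct permutations produced from $\sigma$ are indexed by the deleted digit, and exactly those coming from the $n-sep(\sigma)$ non-separator digits lie in $K_{n-1}$; the distinctness noted above guarantees that these are $n-sep(\sigma)$ different elements of the downset. Therefore the number of $\pi \in K_{n-1}$ with $\pi \preceq \sigma$ equals $n-sep(\sigma)$.

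I expect the only subtle point to be the converse half of the dichotomy, namely confirming that, for a bond-free $\sigma$, a bond can appear after a single deletion \emph{only} through one of the two mechanisms recorded in Definition~\ref{def separate}. In other words, one must check that the vertical and horizontal cases exhaust every way in which a deletion and the ensuing standardization can merge two previously separated values into an adjacent pair, so that no non-separator digit can inadvertently create a bond. Once this is pinned down, the remainder is bookkeeping made possible by the hypothesis $\beta(\sigma)=0$.
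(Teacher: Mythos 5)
Your proof is correct and is essentially the paper's own reasoning: the paper states this as an Observation without proof, precisely because it follows, as you argue, from Theorem~6 of \cite{H2} (all $n$ single-digit deletions of a bond-free $\sigma$ are distinct) together with the fact that a deletion lands in $K_{n-1}$ exactly when the deleted digit is not a separator. The ``subtle point'' you flag at the end is actually vacuous here: Definition~\ref{def separate} \emph{defines} a separator as a digit whose omission produces a new $2$-block (the vertical/horizontal cases being a derived characterization), so the dichotomy holds by definition and no exhaustiveness check of the two mechanisms is needed for the count.
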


In \cite{H2}, Homberger built a multivariate generating function which presented the distribution of the bonds throughout all permutations. He used the principal of inclusion-exclusion in its generating function version. 
The main result of this paper uses the same method for producing a multivariate generating function representing the distribution of the vertical separators (and thus also of horizontal separators, by Remark \ref{obs on Sep}.3). 

\section{Permutations with no separators and permutations with maximal number of separators}
The permutations in $S_n$ that have no separators of any type, are counted by the sequence
A137774 from OEIS. They correspond to the number of ways to place $n$ non-attacking empresses (a chess piece which moves like a rook and a night) on an $n \times n$ chess board. 

Theorem \ref{number of perm with n sep} below deals with the opposite case, i.e., the number of permutations, all the digits of which are separators.
First, we have to include some definitions. A comprehensive survey of these concepts can be found in \cite{Vatter}.

\begin{definition}
Let $\pi=[\pi_1, \ldots ,\pi_n] \in S_n$.   
A {\em block} (or {\em interval}) of $\pi$ is a nonempty contiguous
sequence of entries $\pi_i \pi_{i+1} \ldots \pi_{i+k}$ whose values also form a contiguous sequence of integers.
\end{definition}

\begin{example}
If $\pi = [2647513]$ then $6475$ is a block but $64751$ is not. 
\end{example}

Each permutation can be decomposed into singleton blocks, and also forms a single block by itself; these are the {\em trivial blocks} of the permutation. All other blocks are called {\em proper}.


\begin{definition}
A {\em block decomposition} of a permutation is a partition of it into disjoint blocks. 
\end{definition}

For example, the permutation $\sigma=[67183524]$  can be decomposed as $67\ 1\ 8\ 3524$. 
In this example, the relative order between the blocks forms the permutation $[3142]$, i.e., if we take for each block one of its digits as a representative then the sequence of representatives is order-isomorphic to $[3142]$. 
Moreover, the block $67$ is order-isomorphic to $[12]$, and the block $3524$ is order-isomorphic to $[2413]$. These are instances of the concept of {\em inflation}, defined as follows.

\begin{definition}
\label{inflation}
Let $n_1, \ldots, n_k$ be positive integers with $n_1 + \cdots + n_k = n$.
The {\em inflation} of a permutation $\pi \in S_k$ by the permutations $\alpha_i \in S_{n_i}$ $(1 \leq i \leq k)$ is 
the permutation $\pi[\alpha_1, \ldots, \alpha_k] \in S_n$ obtained by replacing the $i$-th entry of $\pi$ by a block which is order-isomorphic to the permutation $\alpha_i$
on the numbers $\{s_i + 1, \ldots, s_i + n_i\}$ instead of $\{1, \ldots, n_i\}$, where $s_i = n_1 + \cdots + n_{i-1}$ $(1 \leq i \leq k)$. 
\end{definition}

\begin{example}
The inflation of $[2413]$ by $[213],[21],[132]$ and $[1]$ is 
\[
2413[213,21,132,1]=[546 \ 98 \ 132 \ 7].
\]
\end{example}

We are interested in the structure of all permutations in $S_n$ in which every digit is a separator. 
In Theorem 3.18 in \cite{BERS} we proved that in a permutation $\sigma \in K_n$ each digit of $\sigma$ is a separator if and only if $\sigma=\pi[\alpha_1,\dots,\alpha_k]$ where $\alpha_1,\dots, \alpha_k \in \{[3142],[2413]\}$ and $\pi \in S_k$. 

In the following Theorem, we extend this result and show that this structure holds for each permutation in $S_n$ in which each one of its digits is a separator. 

\begin{theorem}
\label{sep}
 In a permutation $\sigma \in S_n$, each digit is a separator if and only if  $n=4k,\, k \in \mathbb{N}$ and there are $\alpha_1,\dots, \alpha_k \in \{[3142],[2413]\}$ and $\pi \in S_k$ such that $\sigma=\pi[\alpha_1,\dots,\alpha_k]$.
\end{theorem}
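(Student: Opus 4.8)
The plan is to reduce the statement to the special case $\sigma \in K_n$, which is exactly Theorem 3.18 of \cite{BERS}. Concretely, I will show that a permutation all of whose digits are separators must be bond-free, i.e.\ must lie in $K_n$; once this is established, Theorem 3.18 of \cite{BERS} identifies such permutations as the inflations $\pi[\alpha_1,\dots,\alpha_k]$ with $\alpha_i \in \{[3142],[2413]\}$, and since each block $\alpha_i$ has length $4$ the equality $n = 4k$ comes for free. The direction $(\Leftarrow)$ is then quick: every inflation $\sigma = \pi[\alpha_1,\dots,\alpha_k]$ with $\alpha_i \in \{[3142],[2413]\}$ is itself bond-free. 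Inside a single block this is the immediate observation that $[3142]$ and $[2413]$ have no bonds; across two consecutive blocks it suffices to note that the last entry of any such block is its second- or third-smallest value and the first entry of the next block is again its second- or third-smallest value, so, since distinct blocks occupy disjoint intervals of four consecutive integers, any two boundary values differ by at least $3$ and no bond can be created. Hence $\sigma \in K_n$, and the ``if'' part of Theorem 3.18 of \cite{BERS} gives that every digit of $\sigma$ is a separator.

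The substance of the proof is the direction $(\Rightarrow)$: if every digit of $\sigma$ is a separator, then $\sigma$ has no bonds. Since complementation $\sigma \mapsto \sigma^c$ (with $\sigma^c_i = n+1-\sigma_i$) preserves both types of separators and interchanges increasing and decreasing runs, I may assume that any run under consideration is increasing. The first step is to rule out long runs. If position $i$ is strictly interior to an increasing maximal run, say $\sigma_i = w$, then its two position-neighbours have values $w-1$ and $w+1$, which differ by $2$, so $\sigma_i$ is not a vertical separator; moreover $w-1$ and $w+1$ occur only at the non-adjacent positions $i-1$ and $i+1$, so $\sigma_i$ is not a horizontal separator either. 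Thus $\sigma_i$ is not a separator, and consequently every maximal run of $\sigma$ has length at most $2$.

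It remains to exclude a maximal run of length exactly $2$, and this is where I expect the real difficulty to lie. Suppose $\sigma$ has a bond, and by the reduction above take it to be increasing, $\sigma_p = m,\ \sigma_{p+1} = m+1$. Inspecting the two separator types shows that for both $\sigma_p$ and $\sigma_{p+1}$ to be separators one must have $\sigma_{p-1} = m+2$ or $\sigma_{p+2} = m-1$. Either equality places a fresh pair of consecutive values at distance-two positions, and re-imposing the separator requirement on the newly constrained digit forces a still more extreme value into an adjacent cell; iterating produces a zigzag $\dots, m+2, m, m+1, m-1, \dots$ whose set of occupied values grows both upward and downward. The plan is to make this propagation precise by choosing the bond whose smaller value $m$ is minimal and tracking the forced chain by induction, deriving a contradiction as soon as the chain demands a value below $1$ or above $n$, or, equivalently, reaches the digit $1$ or $n$, which by Remark~\ref{obs on Sep}(2) can only be vertical and hence cannot satisfy the horizontal requirement the chain produces.

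The hard part, and the step I expect to consume most of the case analysis, is exactly controlling this propagation: at each stage the constrained digit may a priori meet its requirement through either a type~$I$ or a type~$II$ configuration, so one must check that in every branch the chain either continues to escalate or immediately exposes a non-separator. Keeping this bookkeeping finite and consistent is the crux of the argument. Once the length-$2$ run is excluded we have $\sigma \in K_n$, and the desired structure together with $n = 4k$ follows from Theorem 3.18 of \cite{BERS}.
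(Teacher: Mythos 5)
Your skeleton coincides with the paper's: prove the structural direction by showing that a permutation all of whose digits are separators must lie in $K_n$, then invoke Theorem 3.18 of \cite{BERS} (your treatment of the converse direction, checking that the inflations are bond-free, is actually more careful than the paper's, which dismisses it as obvious). Your elimination of runs of length at least $3$ is sound, and your dichotomy at a bond $\sigma_p=m$, $\sigma_{p+1}=m+1$ --- namely that both digits being separators forces $\sigma_{p-1}=m+2$ or $\sigma_{p+2}=m-1$ --- is exactly the engine of the paper's argument. But the decisive step is missing: you announce (``the plan is to make this propagation precise'', ``the hard part \dots is exactly controlling this propagation'') rather than prove that the forced chain terminates in a contradiction, and you explicitly leave open the possibility that at each stage the constrained digit could satisfy its requirement through either a type $I$ or a type $II$ configuration. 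As written, the forward direction is a program, not a proof.

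The missing observation, which is what keeps the paper's bookkeeping finite, is that the branching you fear stops after one step. In the branch $\sigma_{p+2}=m-1$, the digit $m-1$ does have two options ($\sigma_{p+3}=m+2$, vertical, or $\sigma_{p-1}=m-2$, horizontal), but from then on each newly forced digit has exactly one admissible type because the competing configuration would need a value in a position already occupied: e.g.\ once $\sigma=\cdots m,m+1,m-1,m+2\cdots$, the digit $m+2$ cannot be horizontal, since that would require $m+3$ positionally adjacent to $m+1$, whose two neighbours are already $m$ and $m-1$; so $m+2$ is forced vertical, giving $\sigma_{p+4}=m-2$, and the zigzag $m,m+1,m-1,m+2,m-2,m+3,\dots$ marches deterministically rightward until position $n$ or an extreme value is reached, where the last digit has no separator option left (Remark~\ref{obs on Sep}.2). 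The branch $\sigma_{p-1}=m-2$ is likewise deterministic (every step forced horizontal, values strictly descending) and dies at the digit $1$. Finally, the remaining branch $\sigma_{p-1}=m+2$ need not be attacked by a fresh induction at all: the paper disposes of it in one line by passing to $\sigma^{-1}$ via Remark~\ref{obs on Sep}.3, and in your setup the reverse--complement, which preserves both separator types, carries the configuration $\cdots m+2,m,m+1\cdots$ to one of the form $\cdots a,a+1,a-1\cdots$ already handled. With these observations your plan closes and becomes essentially the paper's proof; without them, the crux of the theorem remains unproven.
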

\begin{proof}
The "only if" side is obvious, so we will prove only the "if" side. Let $\sigma \in S_n$ be a permutation such that each
digit of $\sigma$ is a separator. 
If we show that $\sigma$ has no $2$-block, i.e., $\sigma \in K_n$, then by Theorem 3.18 in \cite{BERS}, we are done. 
We assume to the contrary that $\sigma$ contains a block of the form $a,a+1$ and show that not all the digits of $\sigma$ are separators. 

We divide in two different cases, according to the type of separation of $a+1$

\begin{itemize}
\item $a+1$ is a vertical separator:
In this case, $\sigma$ contains the sub-sequence $\cdots a,a+1,a-1\cdots $. The digit $a-1$ is also a separator, so we distinguish between two cases according to the type of separation of $a-1$. 

\begin{enumerate}
    \item If $a-1$ is a vertical separator then $\sigma=\cdots a,a+1,a-1,a+2 \cdots $. Hence $a+2$ must be a vertical separator so we have $\sigma=\cdots a,a+1,a-1,a+2,a-2\cdots$. By the same argument, $a-2$ must be a vertical separator and so $\sigma=\cdots a,a+1,a-1,a+2,a-2,a+3\cdots $. This process continues until we reach $\sigma_n$ which can not be a horizontal separator but also can not be a vertical separator. 
    
    \item If $a-1$ is a horizontal separator then $\sigma=\cdots a-2,a,a+1,a-1 \cdots $. Hence $a-2$ must be a horizontal separator so that  $\sigma=\cdots a-2,a,a+1,a-1,a-3\cdots$. By the same argument, $a-3$ must be a horizontal separator and so $\sigma=\cdots a-4, a-2,a,a+1,a-1,a-3\cdots $. This process continues until we reach $a-k=1$ which can not be of a vertical separator but also can not be of a horizontal separator. 
    
\end{enumerate}

\item $a+1$ ia a horizontal separator: In this case, we consider $\sigma^{-1}$ in which $(\sigma^{-1})_{a+1}$ is a vertical separator.
Since $a,a+1$ is a block in $\sigma$, $\sigma^{-1}$  contains a block in locations $a,a+1$, which means that $(\sigma^{-1})_a=b,(\sigma^{-1})_{a+1}=b + 1$ for some $b \in \{1, \ldots, n-1 \}$. 
Now, by remark \ref{obs on Sep}.3, we can apply the argument of the previous case in order to show that not all of the digits of $\sigma^{-1}$ are separators. This implies that not all the digits of $\sigma$ are separators and we are done. 
\end{itemize}

We conclude that $\sigma \in K_n$ where $K_n$ is the set of king permutations of order $n$. 
\end{proof}

In Figure \ref{fig1} we can see the structure of such  permutations that each one of their digits is a separator, according to the above theorem.

\begin{figure}[!ht]

  \centering
  \begin{tikzpicture}[cap=round,line width=0.2pt,scale=0.3]
\draw[-](0,0)--(16,0);
\draw[-](0,1)--(16,1);
\draw[-](0,2)--(16,2);
\draw[-](0,3)--(16,3);

\draw[-](0,4)--(16,4);
\draw[-](0,5)--(16,5);
\draw[-](0,6)--(16,6);
\draw[-](0,7)--(16,7);

\draw[-](0,8)--(16,8);
\draw[-](0,9)--(16,9);
\draw[-](0,10)--(16,10);
\draw[-](0,11)--(16,11);

\draw[-](0,12)--(16,12);
\draw[-](0,13)--(16,13);
\draw[-](0,14)--(16,14);
\draw[-](0,15)--(16,15);
\draw[-](0,16)--(16,16);

\draw[-](0,0)--(0,16);
\draw[-](1,0)--(1,16);
\draw[-](2,0)--(2,16);
\draw[-](3,0)--(3,16);

\draw[-](4,0)--(4,16);
\draw[-](5,0)--(5,16);
\draw[-](6,0)--(6,16);
\draw[-](7,0)--(7,16);

\draw[-](8,0)--(8,16);
\draw[-](9,0)--(9,16);
\draw[-](10,0)--(10,16);
\draw[-](11,0)--(11,16);

\draw[-](12,0)--(12,16);
\draw[-](13,0)--(13,16);
\draw[-](14,0)--(14,16);
\draw[-](15,0)--(15,16);
\draw[-](16,0)--(16,16);

\draw[applegreen,line width=4pt](0,0+16)--(4,0+16);
\draw[applegreen,line width=4pt](0,0+16)--(0,0+12);
\draw[applegreen,line width=4pt](0+4,0+12)--(0+4,0+16);
\draw[applegreen,line width=4pt](0+4,0+12)--(0,0+12);

\draw[red,line width=4pt](4,8)--(8,8);
\draw[red,line width=4pt](4,8)--(4,4);
\draw[red,line width=4pt](8,4)--(8,8);
\draw[red,line width=4pt](8,4)--(4,4);

\draw[blue,line width=4pt](8,4)--(12,4);
\draw[blue,line width=4pt](8,4)--(8,0);
\draw[blue,line width=4pt](12,0)--(12,4);
\draw[blue,line width=4pt](12,0)--(8,0);

\draw[amber,line width=4pt](12,12)--(16,12);
\draw[amber,line width=4pt](12,12)--(12,8);
\draw[amber,line width=4pt](16,8)--(16,12);
\draw[amber,line width=4pt](16,8)--(12,8);

\draw[line width=1pt][fill]  (0.5,13.5) circle (0.3cm);
\draw[line width=1pt][fill]  (1.5,15.5) circle (0.3cm);
\draw[line width=1pt][fill]  (2.5,12.5) circle (0.3cm);
\draw[line width=1pt][fill]  (3.5,14.5) circle (0.3cm);

\draw[line width=1pt][fill]  (4.5,6.5) circle (0.3cm);
\draw[line width=1pt][fill]  (5.5,4.5) circle (0.3cm);
\draw[line width=1pt][fill]  (6.5,7.5) circle (0.3cm);
\draw[line width=1pt][fill]  (7.5,5.5) circle (0.3cm);

\draw[line width=1pt][fill]  (8.5,1.5) circle (0.3cm);
\draw[line width=1pt][fill]  (9.5,3.5) circle (0.3cm);
\draw[line width=1pt][fill]  (10.5,0.5) circle (0.3cm);
\draw[line width=1pt][fill]  (11.5,2.5) circle (0.3cm);

\draw[line width=1pt][fill]  (12.5,10.5) circle (0.3cm);
\draw[line width=1pt][fill]  (13.5,8.5) circle (0.3cm);
\draw[line width=1pt][fill]  (14.5,11.5) circle (0.3cm);
\draw[line width=1pt][fill]  (15.5,9.5) circle (0.3cm);

\end{tikzpicture}

\caption{the plot of $[\textcolor{applegreen}{14,16,13,15},\textcolor{red}{7,5,8,6},\textcolor{blue}{2,4,1,3},\textcolor{amber}{11,9,12,10}]$}\label{fig1}

\end{figure}

According to Theorem \ref{sep}, we can now enumerate those permutations.
\begin{theorem}
\label{number of perm with n sep}
The number of permutations in $S_n$ which have exactly $n$ different separators is:

$$
\begin{cases}
2^k k! & n=4k  \\
 0  &{\text O.W}.
\end{cases} \\$$

\end{theorem}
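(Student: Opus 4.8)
The plan is to read Theorem~\ref{number of perm with n sep} as a pure counting consequence of Theorem~\ref{sep}. First I would note that a permutation $\sigma\in S_n$ has $sep(\sigma)=n$ exactly when every one of its $n$ digits is a separator: since $Sep(\sigma)$ is by definition a \emph{set} of digits of $\sigma$, we have $sep(\sigma)=|Sep(\sigma)|\le n$, with equality if and only if each digit lies in $Sep(\sigma)$. Thus the permutations counted by the theorem are precisely those to which Theorem~\ref{sep} applies. In particular, if $4\nmid n$ there are none, which yields the value $0$ in the second case; and if $n=4k$ they are exactly the inflations $\sigma=\pi[\alpha_1,\dots,\alpha_k]$ with $\pi\in S_k$ and each $\alpha_i\in\{[3142],[2413]\}$.

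It then remains, for $n=4k$, to count these inflations. I would consider the map $\Phi\colon S_k\times\{[3142],[2413]\}^k\to S_n$ sending $(\pi,\alpha_1,\dots,\alpha_k)$ to $\pi[\alpha_1,\dots,\alpha_k]$. Its domain has exactly $k!\cdot 2^k$ elements, and by Theorem~\ref{sep} its image is exactly the set we wish to enumerate. Hence the theorem reduces to showing that $\Phi$ is injective, for then the count is $|S_k\times\{[3142],[2413]\}^k|=2^k k!$.

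To prove injectivity I would argue that the block structure is intrinsic to $\sigma$. Every $\alpha_i$ has length $4$, so in any such inflation the underlying block decomposition partitions the position set $\{1,\dots,4k\}$ into $k$ blocks, each of which is a contiguous interval of positions of size $4$. But there is only one partition of $\{1,\dots,4k\}$ into consecutive intervals of length $4$, namely $\{1,\dots,4\},\{5,\dots,8\},\dots,\{4k-3,\dots,4k\}$. Consequently, if $\pi[\alpha_1,\dots,\alpha_k]=\tau[\beta_1,\dots,\beta_k]=\sigma$, both decompositions use these same $k$ position-blocks; reading off the pattern of $\sigma$ on the $j$-th block gives $\alpha_j=\beta_j$ (the standardization of $\sigma_{4j-3}\sigma_{4j-2}\sigma_{4j-1}\sigma_{4j}$), and reading off the relative order of the $k$ blocks by value gives $\pi=\tau$. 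Thus $\Phi$ is injective, and the count $2^k k!$ follows, completing the case $n=4k$.

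I expect the only point requiring care to be this injectivity step, i.e.\ the observation that the quadruple decomposition can be recovered from $\sigma$ alone; once one uses that a block must be a contiguous interval of positions and that all blocks here have the common length $4$, the uniqueness of the partition of $\{1,\dots,4k\}$ into consecutive length-$4$ intervals makes the argument immediate. Everything else is a direct translation of Theorem~\ref{sep} into an enumeration.
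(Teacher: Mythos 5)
Your proposal is correct and takes essentially the same route as the paper, which offers no separate proof but derives the count directly from Theorem~\ref{sep} by enumerating the inflations $\pi[\alpha_1,\dots,\alpha_k]$ with $\pi\in S_k$ and $\alpha_i\in\{[3142],[2413]\}$, giving $2^k k!$ for $n=4k$ and $0$ otherwise. Your explicit injectivity verification (recovering the unique partition of positions into consecutive length-$4$ intervals, hence the $\alpha_i$ and $\pi$, from $\sigma$ alone) is sound and merely fills in a step the paper leaves implicit.
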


\section{A generating function for vertical separators}

In this section we present a generating function for the number of vertical separators. 
 For each $n,m \in \mathbb{N}$ let $s_{n,m}$ be the number of permutations $\pi \in S_n$ with exactly $m$ vertical separators. We want to calculate the generating function 
 $h(z,u)=\sum\limits_{n \geq 0}{\sum\limits_{m=0}^{n}s_{n,m}z^nu^m}$. According to remark \ref{obs on Sep}.3, this  generating function is the same for horizontal separators.\\

 In order to construct the function $h(z,u)$, we will enlarge the set of elements we work with such that it will contain marked permutations. We then use the principle of inclusion-exclusion together with a method of splitting permutations into two parts to achieve the generating function for the number of vertical separators

\subsection{Counting permutation with mark bonds} 

 A {\it marked permutation} is a permutation in which each bond can be chosen to be marked or not. 
 The marked bonds will be denoted by a bar above the corresponding part of the permutation. If several adjacent bonds are marked, then we put a long bar above the corresponding run. An entry that is not contained in a marked bond is considered to be a run of length $1$. 
\begin{example}
Let $\pi=[6 1 3 4 5 2 8 7 9]$. Here are some permutations with marked bonds, made out of $\pi$: $[6 1 \overline{3 4 5} 2 \overline{8 7} 9]$, $[6 1 \overline{3 4} 5 2 8 7 9]$,  $[6 1 3 4 5 2 \overline{8 7} 9]$.  
\end{example}

In order to count the marked permutations, we introduce another way to write them. 

 Recall that for $n \in \mathbb{N}$, a {\it composition} with $m$ non-zero parts of $n$ is a vector $(a_1,\dots,a_m)$ such that $a_i \in \mathbb{N}$ and $\sum\limits_{i=1}^m{a_i}=n$. We define an {\it arrowed composition} of $n$ to be a composition in which after every part which is greater than $1$ there exists one of the signs $\uparrow$ or $\downarrow$. 

 For example, $(2\uparrow,1,7\downarrow,2\uparrow)$ is an arrowed composition of $n=12$. 

Now, each marked permutation $\pi \in S_n$ can be uniquely presented as an arrowed composition $(a_1,\dots,a_m)$ of $n$, together with a permutation $\sigma \in S
_m$. This idea will be best clarified by the following example. 

\begin{example}
Let $\pi=[2\overline{45}6 1\overline{987}3]$. We write $\pi$ as a pair consisting of an arrowed composition of $m=6$ parts $\lambda$, and a permutation $\sigma \in S_6$.   First, write $\pi$ as a sequence of runs: $b_1=2,b_2=\overline{45},b_3=6,b_4=1,b_5=\overline{987},b_6=3$.
Each run contributes its length to the composition. Then for each part, we add the sign $\uparrow$ if the corresponding run is increasing, the sign $\downarrow$ if the run is decreasing and no arrow if the run is of length $1$. 
In our case we get $\lambda=(1,2\uparrow,1,1,3\downarrow,1)$. Now $\sigma \in S_6$ is the permutation induced by the order of the blocks. In our case we have: $\sigma=[245163]$. The marked permutation $\pi$ is now uniquely defined by the pair $(\lambda,\sigma)$. 
\end{example}

In other words, if we replace each $j\uparrow$ with the ascending permutation $[123\ldots j]$ and each $j\downarrow$ with the descending permutation $[j\ldots 321]$, we can see that this defines an inflation. In the previous example we can write $\pi=245163[1,12,1,1,321,1]$.
 For convenience, we denote this inflation by  $\sigma[\lambda]$.\\
 
 In \cite{H2} (during the proof of Theorem 10), the author calculated the generating function, counting the number of permutations having a specific number of bonds. This was done by  calculating the generating function of marked bonds, and using the inclusion-exclusion principle. If we denote by $a_{n,m}$ the number of permutations of $S_n$ with $m$ marked bonds and put $A(z,u)=\sum\limits_{n\geq 1}\sum\limits_{m\geq 0}{a_{n,m}z^n u^m}$, then the identity permutation contributes $z$ and for each $j\geq 2$, a run of order $j$ can be either $[123\ldots j]$ or $[j\ldots 321]$, each of them contributes $u^{n-1}$, so the contribution is $2z^j u^{j-1}$. It is easy to see from the above that 
$$A(z,u)=\sum_{m \geq 0}{m!(z+2z^2u+2z^3u^2+2z^4u^3+\cdots)^m=\sum_{m\geq 0}{m! (z+\frac{2z^2u}{1-zu})^m}}.$$ 
(Here $m$ denotes the number of runs). \\

\subsection{Comb  decomposition and marked separators}
Coming back to our counting of permutations with respect to the number of vertical separators, we show now 
how to make a reduction  of this problem to the problem of counting marked permutations with respect to the number of bonds. 
We start with the definition of what we call here  {\it comb permutations} as follows:
 \begin{definition}\label{Hadamar}
 Let $\sigma=(\sigma_1,\dots,\sigma_k),\tau=(\tau_1,\dots,\tau_k)$ be two sequences such that $\{\sigma_1,\dots,\sigma_k,\tau_1,\dots,\tau_k\}=\{1,2,\dots, 2k\}$. Define the comb permutation $\pi=\sigma \odot \tau$ by $$\pi=[\sigma_1,\tau_1,\sigma_2,\tau_2,\ldots ,\sigma_k,\tau_k] \in S_{2k}.$$ 
 
 Similarly, let $\sigma=(\sigma_1,\dots,\sigma_{k+1}),\tau=(\tau_1,\dots,\tau_k)$ be two sequences such that $\{\sigma_1,\dots,\sigma_{k+1},\tau_1,\dots,\tau_k\}=\{1,2,\dots, 2k+1\}$. We define the comb permutation $\pi=\sigma \odot \tau$ by $$\pi=[\sigma_1,\tau_1,\sigma_2,\tau_2,\dots ,\sigma_k,\tau_k, \sigma_{k+1}] \in S_{2k+1}.$$ 
 When $\pi=\sigma \odot \tau$, we denote $\sigma$ as $\pi^{odd}$ and $\tau$ as $\pi^{even}$.
 \end{definition}

 Now, let $\pi=\pi^{odd} \odot \pi^{even}$ where $\pi^{odd}$ and $\pi^{even}$ are sequences with marked bonds.  Note that if $(\pi^{odd}_{i}\pi^{odd}_{i+1})$ is a marked bond of $\pi^{odd}$ then the element of $\pi^{even}$ which lies between $\pi^{odd}_{i}$ and $\pi^{odd}_{i+1}$ in $\pi$ is a vertical separator, we call it a {\it marked separator} and denote it by putting a hat over it. For example, if $\pi^{odd}=(\overline{12}4)$ and $\pi^{even}=(53)$ then $\overline{12}$ is a marked bond in $\pi^{odd}$ and thus $5$ is a marked separator in $\pi^{odd}\odot \pi^{even}=1\hat{5}234$. Similarly, define marked separators for marked bonds in $\pi^{even}$. Therefore, we have the following: 
\begin{observation}\label{split}

 Let $\pi^{odd},\pi^{even}$ be defined as in Definition \ref{Hadamar} and let $\pi=\pi^{odd} \odot \pi^{even}$. Then the number of (marked) vertical separators in $\pi$ is equal to the total number of (marked) bonds in $\pi^{odd}$ and in $\pi^{even}$. 
 \end{observation}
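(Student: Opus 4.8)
The plan is to establish the claimed equality by a careful bijective analysis of which entries of $\pi = \pi^{odd} \odot \pi^{even}$ can be vertical separators, and to track precisely how each vertical separator arises from a bond in one of the two interleaved sequences. The key structural fact I would exploit is that in the comb permutation, the entries at odd positions are exactly the entries of $\pi^{odd}$ (in order) and the entries at even positions are exactly the entries of $\pi^{even}$ (in order). A vertical separator, by Definition \ref{def separate}(1), is an entry $\pi_i = b$ sitting in a pattern $\ldots, a, b, a\pm 1, \ldots$, so that omitting it creates the new bond between $a$ and $a\pm 1$. The crucial observation is that the two neighbors $a$ and $a\pm 1$ of a vertical separator $b$ are the entries immediately to its left and right in $\pi$; since the comb alternates between $\pi^{odd}$ and $\pi^{even}$, these two neighbors always lie in the \emph{same} one of the two sequences, and they are consecutive entries there.

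First I would make this precise: if $\pi_i$ is at an even position, then $\pi_{i-1}$ and $\pi_{i+1}$ are consecutive entries of $\pi^{odd}$, say $\pi^{odd}_j$ and $\pi^{odd}_{j+1}$; and $\pi_i$ being a vertical separator means exactly that $|\pi^{odd}_j - \pi^{odd}_{j+1}| = 1$, i.e. that $(\pi^{odd}_j, \pi^{odd}_{j+1})$ is a bond of $\pi^{odd}$. The symmetric statement holds for $\pi_i$ at an odd position, where the two neighbors are consecutive entries of $\pi^{even}$. This gives a map from vertical separators of $\pi$ to bonds of $\pi^{odd}$ together with bonds of $\pi^{even}$. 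Second, I would check that this map is a bijection: each bond $(\pi^{odd}_j, \pi^{odd}_{j+1})$ of $\pi^{odd}$ corresponds to exactly one entry of $\pi$ lying strictly between them, namely $\pi^{even}_j = \tau_j$, and by the definition of the comb this entry occupies the even position between $\sigma_j$ and $\sigma_{j+1}$, so the correspondence is well-defined and invertible. I must also confirm that the resulting bond is genuinely \emph{new}, i.e. that $a$ and $a\pm 1$ were not already adjacent before the deletion — but they are separated by $b$ in $\pi$, so they are not adjacent in $\pi$, and omitting $b$ is exactly what brings them together, so the word ``new'' in Definition \ref{def separate} is automatically satisfied.

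Third, I would handle the \emph{marked} version in parallel: by the definitions given just above the statement, a marked separator of $\pi$ is precisely an entry lying between the two endpoints of a \emph{marked} bond of $\pi^{odd}$ or of $\pi^{even}$, so the same correspondence, restricted to marked bonds, matches marked separators of $\pi$ with marked bonds of $\pi^{odd}$ and $\pi^{even}$. This makes the parenthetical ``(marked)'' in the statement fall out of the same bijection. Finally I would note that the counts add rather than interact: a single entry of $\pi$ cannot simultaneously be the separator witnessing a bond of $\pi^{odd}$ and a bond of $\pi^{even}$, because its two immediate neighbors lie in only one of the two sequences depending solely on the parity of its position, so the separators coming from $\pi^{odd}$-bonds and from $\pi^{even}$-bonds are disjoint and exhaust all vertical separators. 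Summing gives that the number of (marked) vertical separators in $\pi$ equals the total number of (marked) bonds in $\pi^{odd}$ and $\pi^{even}$, as claimed.

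The main obstacle I anticipate is the boundary bookkeeping in the odd-length case of Definition \ref{Hadamar}, where $\pi^{odd}$ has one more entry than $\pi^{even}$, so the first and last positions of $\pi$ are both odd; I would need to verify that the indices match up so that every bond of $\pi^{odd}$ still has an even-position entry strictly between its endpoints (it does, since a bond uses consecutive entries $\sigma_j, \sigma_{j+1}$ with $\tau_j$ between them for $j \le k$), and that the extra trailing entry $\sigma_{k+1}$ does not spuriously contribute. A secondary subtlety worth stating explicitly is that an endpoint of $\pi$ — being at an odd boundary position — has only one neighbor and hence can never be a vertical separator, consistent with Remark \ref{obs on Sep}.2; this confirms that no separators are lost or invented at the ends.
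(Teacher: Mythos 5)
Your proposal is correct and takes essentially the same approach as the paper: the paper states this result as an Observation whose justification is exactly the correspondence you describe, namely that the entry of $\pi$ lying between the endpoints of a (marked) bond of $\pi^{odd}$ or $\pi^{even}$ is precisely a (marked) vertical separator, since the two neighbors of any entry in the comb $\pi^{odd}\odot\pi^{even}$ are consecutive entries of the opposite-parity sequence. Your write-up simply spells out the bijectivity, the disjointness by position parity, the ``new bond'' condition, and the endpoint bookkeeping that the paper leaves implicit.
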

 
 
Let $n=2k$. Given two arrowed compositions of $k$: $\lambda_o$ of size $m_o$ and $\lambda_e$ of size  $m_e$, 
and given a permutation $\sigma \in S_{m_o+m_e}$, we take the inflation $\alpha=\sigma[\lambda_o,\lambda_e]$ (it is a permutation with marked bonds).  We construct a permutation $\pi$ as follows: denote the first $k$ elements of $\alpha$ by $\pi^{odd}$ and the last $k$ elements of $\alpha$ by $\pi^{even}$. Now $\pi=\pi^{odd}\odot\pi^{even}$. The case $n=2k+1$ is similar.

\begin{example}
\label{exa}
For $k=4$, let $\lambda_o=(1,3\downarrow)$,  $\lambda_e=(1,1,2\uparrow)$ and let $\sigma=[34215]$. Then \newline $\alpha=34215[1,321,1,1,12]=[3\overline{654}21\overline{78}]$. Thus $\pi^{odd}=(3\overline{654})$ and $\pi^{even}=(21\overline{78})$ and therefore $\pi=(3\overline{654})\odot (21\overline{78})=[326\hat{1}5\hat{7}\hat{4}8]$.
\end{example}

On the other side, given a permutation $\pi \in S_{n}$ with marked separators, let $\pi^{odd}$ be the sequence of the odd entries of $\pi$ and $\pi^{even}$ be the sequence of the even entries of $\pi$, and mark the relevant bonds, i.e. if $\pi_i$ is a marked separator in $\pi$, then $(\pi_{i-1}\pi_{i+1})$ is a marked bond in $\pi^{odd}$ or in $\pi^{even}$.
Denote by $\alpha$ the permutation with marked bonds obtained by $\pi^{odd}$ followed by $\pi^{even}$. We know that $\alpha$ can  be uniquely presented as an arrowed composition of $n$ together with a permutation of the number of parts, $m$. 
 
\begin{example}
$\pi=[27\hat{1}86\hat{3}549]$.
We use the sequences $\pi^{odd}=(21\overline{65}9)$ and $\pi^{even}=(\overline{78}34)$ to produce $\alpha=[21\overline{65}9\,\overline{78}34]$. This permutation can be presented as $\lambda=(1,1,2\downarrow,1,2\uparrow,1,1)$ with $\sigma=[2157634]$.
\end{example}


\subsection{Calculating the generating function for vertical separators }
Recall that our goal is to find the function $h(z,u)$, which is the generating function for the number of vertical separators.
In order to do that, we first calculate the generating function for the number of {\bf marked} vertical separators. Denote by $b_{n,m}$ the number of permutations of $S_n$ with $m$ marked vertical separators, and let $g(z,u)=\sum\limits_{n\geq 1}\sum\limits_{m\geq 0}{b_{n,m}z^n u^m}$.

As we saw below, there is a correspondence between the number of marked bonds and the number of marked vertical separators, thus we construct the generating function $g(z,u)$ by calculating separately the generating functions for the marked bonds of the odd and the even parts of each permutation. The requirement that the odd part and the even part of a permutation must have (almost) the same size will be met by using the well known {\it Hadamard product} (element-wise) of polynomials and series. 
 \begin{definition}
 Let $R$ be a ring and let $f(x)=\sum\limits_{n \in \mathbb{N}} a_nx^n$, $g(x)=\sum\limits_{n \in \mathbb{N}} {b_nx^n} \in R[[x]]$ be two power series in $x$.  The {\it Hadamard product} of $f(x),g(x)$ is $f(x)*g(x)=\sum\limits_{n=0}^\infty a_n b_n x^n$.
\end{definition}

\begin{example}
$(2+3x-4x^2) * (5+x+7x^2) = 10+3x-28x^2$.
\end{example}

 In order to form the generating function of the marked separators, $g(z,u)$, let us have a look at the permutations $\pi \in S_n$ for a fixed $n$. We would like to find the monomial contributed  by each $\pi=\pi^{odd}\odot\pi^{even} \in S_n$ using the monomial corresponding to the marked bonds of $\pi^{odd}$ and $\pi^{even}$. 
 
 Let $n=2k$, in this case, $\pi^{odd}, \pi^{even}$ are sequences of order $k$, and therefore we use the Hadamard product to combine the two monomials.   Each $\pi \in S_{2k}$ contributes to $g(z,u)$ a monomial of the form $u^m z^{2k}$ where the monomials corresponding to the marked bonds of $\pi^{odd}$ and $\pi^{even}$ are $f_o(z,u)=u^{m_1} z^{k}$ and  $f_e(z,u)=u^{m_2} z^{k}$, respectively, where $m=m_1+m_2$. 
 If we look at those monomials as functions of the variable $z$, then we can easily see that the coefficient of $z^{2k}$ in $g(z,u)$ should be the product of the coefficients of $z^k$ of the odd and even parts. So we have to set $z^2$ instead of $z$ in the monomoials of $\pi^{odd}$ and $\pi^{even}$ and get:  $$f_o(z^2,u)*f_e(z^2,u)=u^{m_1} (z^2)^{k}*u^{m_2} (z^2)^{k}=u^m z^{2k}.$$ 
 
 \begin{example}
Return to example \ref{exa}. The monomial corresponding to the marked bonds of $\pi^{odd}=(3\overline{654})$ is $f_o(z,u)=z^4u^2$. In the same way,
 the monomial corresponding to the marked bonds of $\pi^{even}=(21\overline{78})$ is $f_e(z,u)=z^4u$. Thus $f_o(z,u)*f_e(z,u)=z^4u^3$. However, the monomial corresponding to the marked separators of $\pi=(3\overline{654})\odot (21\overline{78})=[326\hat{1}5\hat{7}\hat{4}8]$ is supposed to be $z^8u^3$. Note that if we take  $f_o(z^2,u)*f_e(z^2,u)$ we get exactly what we need. 
\end{example}

 Similarly, if $n=2k+1$, then each $\pi=\pi^{odd}\odot\pi^{even} \in S_n$ contributes a monomial of the form $u^m z^{2k+1}$ where the monomial of $\pi^{odd}$ is $f_o(z,u)=u^{m_1} z^{k+1}$ and the monomial of $\pi^{even}$ is $f_e(z,u)=u^{m_2} z^{k}$, $m=m_1+m_2$. Now, $$\big[ \frac{1}{z}f_o(z^2,u) \big] * \big[ z\,f_e(z^2,u) \big]=\big[ \frac{1}{z}u^{m_1} (z^2)^{k+1} \big] * \big[ z\,u^{m_2} (z^2)^{k} \big] =u^m z^{2k+1}.$$

\begin{example}
Let $\pi=[2\hat{5}\hat{3}4176]$. 
Then $\pi^{odd}=(\overline{23}16)$ and 
$\pi^{even}=(\overline{54}7)$. Then $f_o(z,u)=uz^4$ and $f_e(z,u)=uz^3$. So 
$$\big[\frac{1}{z}f_o(z^2,u) \big] * \big[z f_e(z^2,u) \big]=[\frac{1}{z}u(z^2)^4]*[z u(z^2)^3]=u^2z^7$$ as required. 

\end{example}

Using the above explanations, and the generating functions version to the inclusion-exclusion principle, we get the following calculation of the generating function of vertical separators, $h(z,u)$.

\begin{theorem}
$$h(z,u)=\sum_{m_o,m_e\geq 0}{(m_o+m_e)! \big[(z^2+\frac{2z^4(u-1)}{1-z^2(u-1)})^{m_o}\big]*\big[(z^2+\frac{2z^4(u-1)}{1-z^2(u-1)})^{m_e}}\big]\\$$

$$+\sum_{m_o,m_e\geq 0}{(m_o+m_e)! \big[(z^2+\frac{2z^4(u-1)}{1-z^2(u-1)})^{m_o}\frac{1}{z}\big]*\big[(z^2+\frac{2z^4(u-1)}{1-z^2(u-1)})^{m_e}}z\big]$$

where $*$ is the Hadamard product in $\mathbb{Q}[[u]][[z]]$. 
\end{theorem}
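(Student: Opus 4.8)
The plan is to compute first the generating function $g(z,u)$ for \emph{marked} vertical separators and then recover $h(z,u)$ from it by inclusion--exclusion. The conceptual backbone is Observation \ref{split}: under the comb decomposition $\pi=\pi^{odd}\odot\pi^{even}$, the vertical separators of $\pi$ are in bijection with the bonds of the two shorter sequences $\pi^{odd}$ and $\pi^{even}$, so that $sep_V(\pi)=\beta(\pi^{odd})+\beta(\pi^{even})$ and, after marking, the marked vertical separators of $\pi$ correspond exactly to the marked bonds of $\pi^{odd}$ and $\pi^{even}$.

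First I would reduce to a one-sided count. Exactly as in Homberger's computation of $A(z,u)=\sum_m m!\,(z+\frac{2z^2u}{1-zu})^m$, a marked permutation is encoded by an arrowed composition together with a permutation of its parts, and a single run (a maximal marked bond-run) of length $j$ contributes the ``brick'' $z+\sum_{j\ge 2}2z^{j}u^{j-1}=z+\frac{2z^{2}u}{1-zu}$, where $z$ records the size of the part and $u$ records its marked bonds. I will abbreviate this brick series by $B(z,u)$.

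Next I would assemble $g(z,u)$ from the two halves. Writing $\alpha=\sigma[\lambda_o,\lambda_e]$ with $\sigma\in S_{m_o+m_e}$ and $\lambda_o,\lambda_e$ arrowed compositions having $m_o,m_e$ parts, the key structural point is that no marked run of $\alpha$ can cross the midpoint, since the bond at the junction of $\pi^{odd}$ and $\pi^{even}$ is never a \emph{marked} separator; hence splitting $\alpha$ at its midpoint matches splitting the arrowed composition into $\lambda_o$ and $\lambda_e$, and the correspondence between marked-separator permutations $\pi$ and triples $(\lambda_o,\lambda_e,\sigma)$ is a bijection. To force $\pi^{odd}$ and $\pi^{even}$ to have compatible sizes I use the Hadamard product in $z$ together with the substitution $z\mapsto z^{2}$: for $n=2k$ both halves have size $k$, so $[B(z^{2},u)]^{m_o}*[B(z^{2},u)]^{m_e}$ selects the diagonal $z^{2k}$ terms; for $n=2k+1$ the halves have sizes $k+1$ and $k$, which is produced by the shift $\bigl[\tfrac1z B(z^2,u)^{m_o}\bigr]*\bigl[z\,B(z^2,u)^{m_e}\bigr]$. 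Summing over $m_o,m_e\ge0$ and weighting by $(m_o+m_e)!$ for the choice of $\sigma$ yields
$$g(z,u)=\sum_{m_o,m_e\ge0}(m_o+m_e)!\,[B(z^2,u)^{m_o}]*[B(z^2,u)^{m_e}]+\sum_{m_o,m_e\ge0}(m_o+m_e)!\,\Bigl[\tfrac1z B(z^2,u)^{m_o}\Bigr]*\Bigl[z\,B(z^2,u)^{m_e}\Bigr].$$

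Finally I would pass from marked to genuine separators. Since each of the $sep_V(\pi)$ separators is independently marked or not, $g(z,u)=\sum_\pi z^{|\pi|}(1+u)^{sep_V(\pi)}=h(z,1+u)$, whence $h(z,u)=g(z,u-1)$. Substituting $u\mapsto u-1$ turns each brick $B(z^2,u)$ into $z^2+\frac{2z^4(u-1)}{1-z^2(u-1)}$, which is precisely the expression in the statement. The main obstacle I anticipate is the bijective bookkeeping of the assembly step, namely verifying that the midpoint split is compatible with the arrowed-composition decomposition and that the Hadamard product together with the $z\mapsto z^2$ (and $\tfrac1z,z$) rescaling enforces the size constraints exactly, with no over- or under-counting; once this is pinned down, the inclusion--exclusion substitution is routine.
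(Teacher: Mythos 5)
Your proposal is correct and follows essentially the same route as the paper: you build the marked-separator generating function $g(z,v)$ from the brick series $z+\frac{2z^2v}{1-zv}$ via arrowed compositions, enforce the size constraint between $\pi^{odd}$ and $\pi^{even}$ with the Hadamard product after the substitution $z\mapsto z^2$ (with the $\frac{1}{z}$ and $z$ shifts in the odd case), and recover $h(z,u)=g(z,u-1)$ by inclusion--exclusion. Your explicit remark that no marked run can cross the midpoint of $\alpha$ is a useful justification of the bijection that the paper leaves implicit, but it does not change the argument.
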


\begin{proof}

Let us denote for each $m \in \mathbb{N}$: $$p_m(z,v)=(z+2z^2v+2z^3v^2+\cdots)^m=(z+\frac{2z^2v}{1-zv})^{m}$$ Then $p_m(z,v)$ counts the number of ways to construct an arrowed composition $\lambda$ of size $m$. 
We relate to $n$ even and $n$ odd separately.
In order to construct a permutation of $S_{2k}$ with marked separators, we have to choose two arrowed compositions of $k$: $\lambda_o$ of size $m_o$, and $\lambda_e$ of size $m_e$, we also choose a permutation $\sigma \in S_{m_o+m_e}$. It is easy to see that this contributes to our function $(m_o+m_e)!p_{m_o}(z^2,v) *p_{m_e}(z^2,v)$. 
For $S_{2k+1}$ we have $(m_o+m_e)!\big[\frac{1}{z}p_{m_o}(z^2,v))\big] *\big[z\,p_{m_e}(z^2,v)\big]$.
We go over all the values for $m_o$ and $m_e$ for both $n$ even and $n$ odd and we obtain the generating function of the marked vertical separators: 

$$g(z,v)=\sum_{m_o,m_e\geq 0}{(m_o+m_e)! \big[(z^2+\frac{2z^4v}{1-z^2v})^{m_o}\big]*\big[(z^2+\frac{2z^4v}{1-z^2v})^{m_e}}\big]\\$$

$$+\sum_{m_o,m_e\geq 0}{(m_o+m_e)! \big[(z^2+\frac{2z^4v}{1-z^2v})^{m_o}\frac{1}{z}\big]*\big[(z^2+\frac{2z^4v}{1-z^2v})^{m_e}}z\big]$$



Now, we can use this generating function to obtain $h(z, u)$. The variable $v$
represents the \textbf{marked} vertical separators, while $u$ is responsible for vertical separators. Since every
vertical separator can either be marked or unmarked, it follows that by replacing $v + 1$ by $u$ we obtain that the generating function of the vertical separators is $$h(z,u)=g(z,u-1).$$

\end{proof}

\section{The expectation of the number of separators}
In this section we calculate the expectation of the number of  separators in a randomly 
chosen permutation. In order to do that, let us first calculate the expectation of the number of vertical separators. Consider the sample space of all $n!$ permutations, with the uniform probability. 
For each $1\leq i \leq n$, let $X_i$ be the Bernuli random variable such that for each $\pi\in S_n$, $X_i=1$ if 
the digit $i$ is a vertical separator in $\pi$ and $X_i=0$ otherwise. Then the sum $X=\sum\limits_{i=1}^n X_i$ counts the number of vertical separators for each $\pi \in S_n$.

In order to calculate $E[X]=\sum\limits_{i=1}^n{E[X_i]}$, let us first calculate $E[X_i]$ for each $1 \leq i \leq n$. 
The digit $i$ is a vertical separator in a permutation $\pi \in S_n$ if $\pi$ contains 
a consecutive sequence of the form $a,i,a+1$ or its reverse. 
If  $i\in \{1,n\}$, the digit $a$ can be chosen in $n-2$ ways. After choosing $a$ we have $(n-2)!$ ways to arrange the rest of the permutation, so that $E[X_1]=E[X_n]=\frac{2(n-2)(n-2)!}{n!}$. 
Now, for each $1<i<n$, the same consideration applies, but now we have only $(n-3)$ ways to choose $a$. This gives us $E[X_i]=\frac{2(n-3)(n-2)!}{n!}$. 
We have now: $$E[X]=\sum\limits_{i=1}^n{E[X_i]}=2E[X_1]+(n-2)E[X_2]$$

A simple calculation now yields the following:

\begin{theorem}
The expectation of the number of vertical separators in a randomly chosen $n$-permutation is $\frac{2(n-2)}{n}$. The asymptotic value of the expectation is $2$. 
\end{theorem}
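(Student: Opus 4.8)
The plan is to compute $E[X]$ by linearity of expectation, using the per-digit expectations already isolated in the setup, and then to collapse everything to a single rational function of $n$. Concretely, I would start from $E[X]=\sum_{i=1}^n E[X_i]$ and record that the value of $E[X_i]$ depends only on whether $i$ is an endpoint value or an interior value: the two endpoints $i\in\{1,n\}$ each contribute $E[X_1]=E[X_n]=\frac{2(n-2)(n-2)!}{n!}$, while each of the $n-2$ interior values $1<i<n$ contributes $E[X_2]=\frac{2(n-3)(n-2)!}{n!}$.

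The single point that genuinely needs justification — and hence the main obstacle, modest as it is — is that the block counts $2(n-2)(n-2)!$ and $2(n-3)(n-2)!$ are \emph{exact} rather than upper bounds obtained by a union over choices. I would argue this directly: for a fixed $\pi$ in which the value $i$ is a vertical separator, the immediate left and right neighbors of $i$ in $\pi$ are uniquely determined, so exactly one pair (value $a$, orientation) realizes the consecutive pattern $a,i,a+1$ or $a+1,i,a$. Thus the events indexed by the admissible choices of $a$ and the two orientations partition $\{X_i=1\}$, and the enumeration that treats the three entries as one block (extendable to a full permutation in $(n-2)!$ ways, with $n-2$ resp.\ $n-3$ admissible values of $a$) counts each relevant permutation exactly once. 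Once this partition claim is accepted, no inclusion–exclusion is needed and the rest is algebra.

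Next I would assemble the grouped sum and simplify:
\[
E[X]=2\,E[X_1]+(n-2)\,E[X_2]=\frac{2(n-2)!}{n!}\Big[\,2(n-2)+(n-2)(n-3)\,\Big].
\]
Factoring $(n-2)$ out of the bracket leaves $2+(n-3)=n-1$, so the bracket equals $(n-2)(n-1)$; using $\frac{(n-2)!}{n!}=\frac{1}{n(n-1)}$, the factors of $(n-1)$ cancel and I obtain
\[
E[X]=\frac{2(n-2)(n-1)}{n(n-1)}=\frac{2(n-2)}{n}.
\]
For the asymptotic statement I would simply rewrite $\frac{2(n-2)}{n}=2-\frac{4}{n}$ and let $n\to\infty$ to read off the limit $2$. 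I expect no further difficulty: the whole argument rests on linearity of expectation plus the exactness of the neighbor count, after which the simplification is routine.
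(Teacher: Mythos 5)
Your proof is correct and takes essentially the same route as the paper: linearity of expectation with $E[X_1]=E[X_n]=\frac{2(n-2)(n-2)!}{n!}$ and $E[X_i]=\frac{2(n-3)(n-2)!}{n!}$ for interior digits, grouped as $E[X]=2E[X_1]+(n-2)E[X_2]$ and simplified to $\frac{2(n-2)}{n}=2-\frac{4}{n}$. Your explicit disjointness argument --- that the two neighbors of the digit $i$ uniquely determine the realizing pair of value $a$ and orientation, so the block counts are exact rather than a union bound --- is a detail the paper leaves implicit, but it changes nothing in the approach.
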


We turn now to the calculation of the expectation of the number of separators that are both vertical and horizontal. Let $Y$ be the random variable counting the number of digits of a permutation $\pi$ which are separators of both types. Define, for each $1<i<n$, $Y_i$ to be the Bernuli 
random variable which for each $\pi\in S_n$ is equal to $1$ if the digit $i$ is both a vertical and a horizontal separator in $\pi$ and is equal to $0$ otherwise.  Note that the digits $1$ and $n$ can not be separators of both types. Let us calculate $E[Y_i]$ for $1<i<n$:

We have one of the following cases, depending on the structure of $\pi$. Either the digits of $\pi$ which make $i$ a separator of both types appear as one single part (like in $\pi=[624351]$, where $3$ is a separator of both types due to the sequence $2435$), or they appear in two different places (like in $\pi=[241536]$, where $24$ makes $3$ a horizontal separator and $5$ and $6$ make it a vertical one).

\begin{enumerate}
    \item The first case occurs when $i=2$ and $i$ is a part of the consecutive sub-sequence $13{\bf 2}4$ or its reverse. Similarly, when $i=n-1$ and $i$ is a part of the consecutive sub-sequence $n-3,{\bf n-1},n-2,n$ or its reverse.
    Also, for $2<i<n-1$, $i$ might be a part of one of the consecutive sub-sequences $i-2,{\bf i},i-1,i+1$ or $i-1,i+1,{\bf i},i+2$ or their reverses.\\ 
    \\ For each one of those sub-cases, there are exactly $(n-3)!$ ways to arrange the rest of the permutation.
    
    \item The permutation $\pi$ contains the sub-sequences of the form $a,i,a+1$ and $i-1,i+1$ or their reverses. 
    Again, we have to divide into two cases: $i\in \{2,n-1\}$ and $2< i<n-1$.
    
    If $i\in \{2,n-1\}$ then there are $n-4$ ways to choose $a$, while if $2< i<n-1$ then there are $n-5$ ways to choose $a$. Each choice of such two sub-sequences leaves $(n-3)!$ ways to arrange the rest of the permutation.
    
\end{enumerate}

Hence, we have
$$E[Y_i]=
\begin{cases}
0 & i=1,n  \\
\\
 \frac{2 \cdot (n-3)!+2\cdot 2\cdot (n-3)!(n-4)}{n!}  &i=2,n-1  \\\\
 \frac{ 4 \cdot (n-3)!+2\cdot 2\cdot (n-3)!(n-5)}{n!} & 3 \leq i \leq n-2
\end{cases} \\$$
We have now: $$E[Y]=\sum\limits_{i=1}^n{E[Y_i]}=2E[Y_2]+(n-4)E[Y_3].$$

A simple calculation now yields the following:
\begin{theorem}
The expectation of the number of separators of both types: vertical and horizontal, in a randomly chosen $n$-permutation is $$\frac{4(n-3)^2}{n(n-1)(n-2)}$$ The asymptotic value of the expectation is $0$. 
\end{theorem}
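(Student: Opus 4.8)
The plan is to complete the linearity-of-expectation argument that has already been set up: the genuinely delicate part, namely the case analysis producing the per-digit probabilities $E[Y_i]$, is finished, so what remains is to carry out the algebraic simplification and then read off the limit. First I would record why the sum $E[Y] = \sum_{i=1}^n E[Y_i]$ collapses to only two distinct contributions. The digits $i = 1$ and $i = n$ contribute nothing; the two boundary digits $i = 2$ and $i = n-1$ each contribute the same value $E[Y_2]$; and the $n-4$ interior digits $3 \le i \le n-2$ each contribute $E[Y_3]$. This is exactly the identity $E[Y] = 2E[Y_2] + (n-4)E[Y_3]$ displayed above.

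Next I would simplify the two nonzero values by pulling $(n-3)!$ out of each numerator:
\[
E[Y_2] = \frac{2(n-3)!\,(2n-7)}{n!}, \qquad E[Y_3] = \frac{4(n-3)!\,(n-4)}{n!}.
\]
Substituting these into the collapsed sum and factoring out the common $\frac{4(n-3)!}{n!}$ gives
\[
E[Y] = \frac{4(n-3)!}{n!}\bigl[(2n-7) + (n-4)^2\bigr].
\]

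The one step that makes everything fall into place is recognizing that the bracketed expression is a perfect square: $(2n-7) + (n-4)^2 = n^2 - 6n + 9 = (n-3)^2$. Combining this with $\frac{(n-3)!}{n!} = \frac{1}{n(n-1)(n-2)}$ yields the claimed formula $E[Y] = \frac{4(n-3)^2}{n(n-1)(n-2)}$. For the asymptotic statement I would simply note that the numerator is a degree-two polynomial while the denominator is degree three, so $E[Y]$ behaves like $4/n$ and hence tends to $0$ as $n \to \infty$.

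Since the combinatorial heavy lifting is already done, there is no real obstacle here; the only things to be careful about are the arithmetic of the two factorizations and the perfect-square identity, and (more conceptually) making sure the reduction to just the two values $E[Y_2]$ and $E[Y_3]$ is justified by the left-right symmetry of the boundary cases and the uniform treatment of all interior digits.
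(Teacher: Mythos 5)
Your proposal is correct and follows the paper's own route exactly: the paper derives the case-by-case values of $E[Y_i]$ before the theorem and then dismisses the rest as ``a simple calculation,'' which is precisely the algebra you carry out, and your factorizations $E[Y_2]=\frac{2(n-3)!(2n-7)}{n!}$, $E[Y_3]=\frac{4(n-3)!(n-4)}{n!}$, and the perfect-square identity $(2n-7)+(n-4)^2=(n-3)^2$ all check out. Nothing is missing; you have simply made explicit the computation the paper leaves to the reader.
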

Now, let $Z$ be the random variable which counts the total number of separators (regardless of the type). By remark \ref{obs on Sep}.3, the number of vertical separators has the same distribution as the number of horizontal separators, so $E(Z)=2E(X)-E(Y)$.
So, we have the following:
\begin{theorem}
The expectation of the number of separators, in a randomly chosen $n$-permutation is $$\frac{4(n^3-6n^2+14n-13)}{n(n-1)(n-2)}.$$ The asymptotic value of the expectation is $4$. 
\end{theorem}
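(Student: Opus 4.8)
The plan is to assemble the statement directly from the two preceding expectation computations by way of a pointwise inclusion--exclusion identity, so that only elementary algebra remains.

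First I would fix an arbitrary $\pi \in S_n$ and observe that every separator of $\pi$ is either vertical only, horizontal only, or of both types. Partitioning $Sep(\pi)=Sep_V(\pi)\cup Sep_H(\pi)$ accordingly gives the pointwise identity
\[
sep(\pi) = |Sep_V(\pi)| + |Sep_H(\pi)| - |Sep_V(\pi)\cap Sep_H(\pi)|.
\]
Taking expectations over the uniform distribution on $S_n$ and using linearity, the random variable $Z$ counting $sep(\pi)$ satisfies $E(Z)=E(|Sep_V|)+E(|Sep_H|)-E(Y)$, where $Y$ counts the digits that are separators of both types, exactly the variable treated in the previous theorem.

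Next I would invoke Remark \ref{obs on Sep}.3: since $Sep_V(\sigma)=Sep_H(\sigma^{-1})$ and $\sigma\mapsto\sigma^{-1}$ is a measure-preserving bijection of $S_n$, the statistics $|Sep_V|$ and $|Sep_H|$ are identically distributed, so $E(|Sep_V|)=E(|Sep_H|)=E(X)$. This yields the compact form $E(Z)=2E(X)-E(Y)$ already anticipated in the text. Both inputs are now in hand: the earlier theorem gives $E(X)=\frac{2(n-2)}{n}$ and the preceding theorem gives $E(Y)=\frac{4(n-3)^2}{n(n-1)(n-2)}$.

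Finally I would substitute and simplify over the common denominator $n(n-1)(n-2)$. Writing $2E(X)=\frac{4(n-2)^2(n-1)}{n(n-1)(n-2)}$ and subtracting $E(Y)$, the numerator becomes $4\big[(n-2)^2(n-1)-(n-3)^2\big]$; expanding $(n-2)^2(n-1)=n^3-5n^2+8n-4$ and $(n-3)^2=n^2-6n+9$ collapses this to $n^3-6n^2+14n-13$, which is precisely the claimed numerator. The asymptotic value then follows from
\[
\frac{4(n^3-6n^2+14n-13)}{n(n-1)(n-2)}=\frac{4n^3+O(n^2)}{n^3+O(n^2)}\longrightarrow 4.
\]
The only step demanding care is this final polynomial expansion, since a sign slip in combining $2E(X)$ with $E(Y)$ would corrupt the numerator; there is no genuine structural obstacle beyond that bookkeeping, as all the probabilistic content was already discharged in the two earlier theorems.
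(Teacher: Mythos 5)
Your proposal is correct and follows essentially the same route as the paper: the paper likewise derives $E(Z)=2E(X)-E(Y)$ from the inclusion--exclusion identity $sep(\pi)=|Sep_V(\pi)|+|Sep_H(\pi)|-|Sep_V(\pi)\cap Sep_H(\pi)|$ together with the equidistribution of vertical and horizontal separators via $\sigma\mapsto\sigma^{-1}$, and then substitutes the two previously computed expectations. Your only addition is to carry out explicitly the ``simple calculation'' the paper leaves to the reader, and your expansion $(n-2)^2(n-1)-(n-3)^2=n^3-6n^2+14n-13$ checks out.
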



\begin{thebibliography}{}
\bibitem{BERS}{E. Bagno,E. Eisenberg, S. Reches and M. Sigron}, { On the poset of king-non-attacking permutations}, submitted. 
\bibitem{Ro} D. P. Robins, {\it The probability that neighbors remain neighbors after random rearrangements}, The American Mathematical Montly, Vol. 87, 1980- issue 2. pp 122-124.
\bibitem{Vatter}
V.\ Vatter,
{\em Permutation classes},
in: Handbook of Combinatorial Enumeration (M.\ Bona,
ed.), CRC Press (2015), pp.\ 753--833.

\bibitem{Wolfowitz}
Wolfowitz, J. {\it Note on runs of consecutive elements} Ann. Math. Stat. 15,
(1944), 97–98.

\bibitem{Kaplansky}
Kaplansky, I. {\it The asymptotic distribution of runs of consecutive elements}
Ann. Math. Stat. 16 (1945), 200–203.

\bibitem{H2}
Cheyne Homberger, {\it Counting fixed-length permutation patterns},  Online Journal of Analytic Combinatorics, 2012.

\end{thebibliography}
\end{document}